\newtheorem{theorem}{Theorem}[section]
\newtheorem{lemma}[theorem]{Lemma}
\newtheorem{proposition}[theorem]{Proposition}
\theoremstyle{definition}
\theoremstyle{remark} \theoremstyle{remark}
\newtheorem{example}[theorem]{Example}
\numberwithin{equation}{section}
\begin{document}

\title[Legendrian submanifolds of $(\kappa,\mu)$-spaces]{A classification of totally geodesic and totally umbilical Legendrian submanifolds of $(\kappa,\mu)$-spaces\thanks{The first two authors are partially supported by the MINECO-FEDER grant MTM2014-52197-P. They are members of the IMUS (Instituto de Matem\'aticas de la Universidad de Sevilla), and of the PAIDI groups FQM-327 and FQM-226 (Junta de Andaluc\'ia, Spain), respectively.}
}
\author[A. Carriazo]{Alfonso Carriazo}
 \address{Departamento de Geometr\'{i}a y Topolog\'{i}a, c/ Tarfia s/n, Universidad de Sevilla, Sevilla 41012, Spain}
 \email{carriazo@us.es}

\author[V. Mart\'{i}n-Molina]{Ver\'onica  Mart\'{i}n-Molina}
 \address{Departamento de Did\'{a}ctica de las Matem\'{a}ticas, Facultad de Ciencias de la Educaci\'on, c/ Pirotecnia s/n, Universidad de Sevilla, Sevilla 41013, Spain}
 \email{veronicamartin@us.es}

\author[L. Vrancken]{Luc Vrancken}
 \address{LAMAV, Universit\'{e} de Valenciennes, 59313, Valenciennes Cedex 9, France \and Departement Wiskunde, KU Leuven, Celestijnenlaan 200 B, 3001, Leuven, Belgium}
 \email{luc.vrancken@univ-valenciennes.fr}

\begin{abstract}
We present classifications of totally geodesic and totally umbilical Legendrian submanifolds of $(\kappa,\mu)$-spaces with Boeckx invariant $I \leq -1$. In particular, we prove that such submanifolds must be, up to local isometries, among the examples that we explicitly construct.
\end{abstract}

\subjclass[2010]{53C15, 53C25, 53C40}

\keywords{$(\kappa,\mu)$-space, Legendrian submanifold, totally geodesic, totally umbilical}

\maketitle

\section{Introduction}

Although under a different name, $(\kappa,\mu)$-spaces were introduced by D. E. Blair, T. Koufogiorgos and B. J. Papantoniou in \cite{blair95} (for technical details, we refer to the Preliminaries section). Actually, these manifolds have proven to be really useful, because they provide non-trivial examples for some important classes of contact metric manifolds (for instance, the unit tangent sphere bundle of any Riemannian manifold of constant sectional curvature carries such a structure). The theory of $(\kappa,\mu)$-spaces was soon developed, with many interesting results. In particular, we can point out the outstanding paper \cite{boeckx2000}, where E. Boeckx classified non-Sasakian $(\kappa,\mu)$-spaces by using the invariant $I$ (depending only on the values of $\kappa$ and $\mu$) introduced by himself. He also provided examples for all possible $(\kappa,\mu)$.

Nevertheless, the theory of submanifolds of $(\kappa,\mu)$-spaces has not been developed in depth yet, even if we can find some very interesting papers about it. For example, in \cite{CTT}, B. Cappelletti Montano, L. Di Terlizzi and M. M. Tripathi proved that any invariant submanifold of a non-Sasakian contact $(\kappa,\mu)$-space is always totally geodesic and, conversely, that every totally geodesic submanifold of a non-Sasakian contact $(\kappa,\mu)$-space such that $\mu\neq 0$ and the characteristic vector field $\xi$ is tangent to the submanifold is invariant. Motivated by these results, we consider the case of submanifolds which are normal to $\xi$. Moreover, we restrict our study to the case of Legendrian submanifolds, i.e., those with dimension $n$ in a $(2n+1)$-dimensional ambient space.

From our point of view, a key step in continuing the analysis of submanifolds of $(\kappa,\mu)$-spaces should be to understand the behavior of the so-called $h$ operator of the ambient space with respect to the submanifold. Therefore, in this paper, we first establish in Section \ref{sec3} a decomposition of that operator in its tangent and normal parts, and find its main properties. In Section \ref{sec4} we present several examples of totally geodesic and totally umbilical Legendrian submanifolds of $(\kappa,\mu)$-spaces with $I\leq -1$. Actually, we prove in Section \ref{sec5} that these examples constitute the complete local classification of these kinds of submanifolds, given by our main results Theorems \ref{theo51} and \ref{theo52}.

\section{Preliminaries}

Let $M$ be a $(2n+1)$-dimensional smooth manifold $M$. Then an \textit{almost contact structure} is a triplet $(\varphi,\xi,\eta)$, where $\varphi$ is a $(1,1)$-tensor field, $\eta$ a $1$-form and $\xi$ a vector field on $M$ satisfying the following conditions
\begin{equation}\label{almostcontact}
\varphi^{2}=-I+\eta\otimes\xi, \; \eta(\xi)=1.
\end{equation}
It follows from \eqref{almostcontact} that $\varphi\xi=0$, $\eta\circ\varphi=0$ and that $\text{rank}(\varphi)=2n$ (\cite{blairbook}).

Any almost contact manifold $\left(M,\varphi,\xi,\eta\right)$ admits a \emph{compatible metric}, i.e. a Riemannian metric $g$ satisfying
\begin{equation*}
g\left(\varphi X,\varphi Y\right)=g\left(X,Y\right)-\eta\left(X\right)\eta\left(Y\right),
\end{equation*}
for all vector fields $X,Y$ on $M$. It follows that $\eta=g(\cdot,\xi)$ and $g(\cdot,\varphi\cdot)=-g(\varphi\cdot,\cdot)$. The manifold $M$ is said to be an \textit{almost contact metric manifold} with structure $\left(\varphi,\xi,\eta,g\right)$.

We can define the \textit{fundamental $2$-form} $\Phi$ of an almost contact metric manifold by $\Phi\left(X,Y\right)=g\left(X,\varphi Y\right)$. If $\Phi=d\eta$, then $\eta$ becomes a contact form, with $\xi$ its Reeb/characteristic vector field and ${\mathcal D}=\ker(\eta)$ its corresponding contact distribution, and $M(\varphi,\xi,\eta,g)$ is called a \textit{contact metric manifold}.

Every contact metric manifold satisfies
\begin{equation}\label{eq-h}
\nabla\xi=-\varphi -\varphi h,
\end{equation}
where $2h$ is the Lie derivative of $\varphi$ in the direction of $\xi$, i.e. $h=\frac12 L_\xi \varphi$. The tensor field $h$ is symmetric with respect to
$g$, satisfies $h\xi=0$, anticommutes with $\varphi$ and vanishes identically if and only if the Reeb vector field $\xi$ is Killing. In this last case the contact metric manifold is said to be
$K$-\textit{contact}.

An almost contact metric manifold is said to be \textit{normal} if $N_{\varphi}:=[\varphi,\varphi]+2d\eta\otimes\xi=0$.
A normal contact metric manifold is called a \textit{Sasakian manifold}. Any Sasakian manifold is K-contact and the converse holds  in dimension $3$ but not in general.


A special class of contact metric manifold is that of $(\kappa,\mu)$-spaces, first studied in \cite{blair95} under the name of \textit{contact metric manifolds with $\xi$ belonging to the $(\kappa,\mu)$-distribution}. A \textit{contact metric $(\kappa,\mu)$-space} is one satisfying the condition
\begin{equation}\label{kappamu}
R(X,Y)\xi=\kappa \, (\eta(Y)X-\eta(X)Y)+\mu \, (\eta(Y)hX-\eta(X)hY),
\end{equation}
for some constants $\kappa$ and $\mu$. In this paper, all manifolds will be contact metric, so we will shorten ``contact metric $(\kappa,\mu)$-space" to ``$(\kappa,\mu)$-space".

Every $(\kappa,\mu)$-space satisfies
\begin{align}
h^2&=(\kappa-1)\varphi^2, \label{eqh2} \\
(\nabla_X\varphi)Y&=g(X,Y+hY)\xi-\eta(Y)(X+hX), \label{nablaphi}\\
(\nabla_Xh)Y&=((1-\kappa)g(X,\varphi Y)-g(X,\varphi hY))\xi \nonumber\\
&\quad -\eta(Y)((1-\kappa)\varphi X+\varphi hX)-\mu\eta(X)\varphi hY. \label{nablah}
\end{align}
Moreover, we have the following result:
\begin{theorem}[\cite{blair95}] \label{theoblair}
Let $M^{2n+1}(\varphi,\xi,\eta,g)$ be a $(\kappa,\mu)$-space. Then $\kappa \leq 1$. If  $\kappa = 1$, then $h = 0$ and $M^{2n+1}$ is a Sasakian manifold. If $\kappa < 1$, $M^{2n+1}$ admits three mutually orthogonal and integrable distributions $E_M(0)=\text{span}(\xi)$, $E_M(\lambda)$ and $E_M(-\lambda)$ determined by
the eigenspaces of $h$, where $\lambda=\sqrt{1-\kappa}$.
\end{theorem}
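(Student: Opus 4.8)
The plan is to prove the three assertions in turn. For the bound $\kappa\le 1$, I would restrict \eqref{eqh2} to the contact distribution $\mathcal{D}=\ker\eta$, where \eqref{almostcontact} gives $\varphi^{2}=-\mathrm{id}$, so that $h^{2}=(1-\kappa)\,\mathrm{id}$ on $\mathcal{D}$. Since $h$ is symmetric with respect to $g$, we have $g(h^{2}X,X)=|hX|^{2}\ge 0$, hence $(1-\kappa)|X|^{2}\ge 0$ for every $X\in\mathcal{D}$; as $\mathcal{D}\neq\{0\}$ this forces $\kappa\le 1$, with equality precisely when $hX=0$ for all $X\in\mathcal{D}$, that is (using $h\xi=0$) precisely when $h\equiv 0$.

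For the case $\kappa=1$, I would substitute $h=0$ into \eqref{nablaphi} to obtain $(\nabla_{X}\varphi)Y=g(X,Y)\xi-\eta(Y)X$ for all vector fields $X,Y$, and then invoke the classical characterization of Sasakian manifolds among contact metric manifolds by exactly this identity (see \cite{blairbook}); this gives that $M^{2n+1}$ is Sasakian.

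The substantive case is $\kappa<1$; set $\lambda=\sqrt{1-\kappa}>0$. Since $h$ is $g$-symmetric it is pointwise diagonalizable, and from $h\xi=0$ together with $h^{2}=\lambda^{2}\,\mathrm{id}$ on $\mathcal{D}$ (again by \eqref{eqh2} and \eqref{almostcontact}) its only eigenvalues are $0$ on $\mathrm{span}(\xi)$ and $\pm\lambda$ on $\mathcal{D}$. I would then set $E(0)=\mathrm{span}(\xi)$ and let $E(\lambda)$, $E(-\lambda)$ be the corresponding eigenspace distributions of $h$; these have globally constant rank (because $\kappa$, hence $\lambda$, is constant), so they are smooth subbundles, they are mutually $g$-orthogonal as eigenspaces of a symmetric operator for distinct eigenvalues, and $TM=E(0)\oplus E(\lambda)\oplus E(-\lambda)$ pointwise. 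Using that $h$ anticommutes with $\varphi$, one checks that $\varphi$ restricts to an isomorphism $E(\lambda)\to E(-\lambda)$, so that $\dim E(\lambda)=\dim E(-\lambda)=n$.

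The heart of the matter is integrability, which I expect to be the main obstacle. The distribution $E(0)$ is one-dimensional and hence integrable. For $E(\lambda)$ (the argument for $E(-\lambda)$ being symmetric), I would take arbitrary local sections $X,Y$ of $E(\lambda)$; then $\eta(X)=\eta(Y)=0$ and, crucially, $g(X,\varphi Y)=0$ because $\varphi Y\in E(-\lambda)$ is orthogonal to $X\in E(\lambda)$, and likewise $g(X,\varphi hY)=\lambda\,g(X,\varphi Y)=0$. Feeding these vanishings into \eqref{nablah}, every term on the right-hand side vanishes, so $(\nabla_{X}h)Y=0$, that is $h(\nabla_{X}Y)=\nabla_{X}(hY)=\lambda\,\nabla_{X}Y$. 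Decomposing $\nabla_{X}Y$ along $E(0)\oplus E(\lambda)\oplus E(-\lambda)$ and comparing then forces $\nabla_{X}Y$ to be a section of $E(\lambda)$ (the $E(0)$- and $E(-\lambda)$-components are killed since $\lambda\neq 0$), whence $[X,Y]=\nabla_{X}Y-\nabla_{Y}X$ is again a section of $E(\lambda)$, and the Frobenius theorem gives integrability. The only technical care needed is the constant-rank and smoothness of $E(\pm\lambda)$ and the term-by-term check in \eqref{nablah}; the real content is the identity $(\nabla_{X}h)Y=0$ whenever $X$ and $Y$ lie in a common eigendistribution of $h$.
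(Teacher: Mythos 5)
Your argument is correct, and it is essentially the standard proof from Blair–Koufogiorgos–Papantoniou: the paper itself gives no proof (Theorem \ref{theoblair} is simply quoted from \cite{blair95}), and your reconstruction from the identities \eqref{eqh2}, \eqref{nablaphi} and \eqref{nablah} stated in the Preliminaries — sign of $1-\kappa$ from $h^{2}=(\kappa-1)\varphi^{2}$, the Sasakian characterization $(\nabla_{X}\varphi)Y=g(X,Y)\xi-\eta(Y)X$ when $h=0$, and $(\nabla_{X}h)Y=0$ for $X,Y$ in a common eigendistribution to get $\nabla_{X}Y\in E(\pm\lambda)$ and hence Frobenius integrability — is exactly the route taken in that reference. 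No logical circularity arises, since \eqref{eqh2}--\eqref{nablah} are derived in \cite{blair95} directly from the $(\kappa,\mu)$-curvature condition, independently of the theorem.
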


As a consequence of this theorem, it was also proved in \cite{blair95} that the sectional curvature of a plane section $\{X,Y\}$ normal to $\xi$ is given by
\begin{equation}\label{sectional}
K(X,Y)=
\begin{cases}
2(1+\lambda)-\mu, \text{ for any } X,Y\in E_M(\lambda), \quad n>1,\\
2(1-\lambda)-\mu, \text{ for any } X,Y\in E_M(-\lambda), \quad n>1,\\
-(\kappa+\mu)(g(X,\varphi Y))^2, \text{ for any unit vectors } X\in E_M(\lambda), Y\in E_M(-\lambda).
\end{cases}
\end{equation}

Given a contact metric manifold $M^{2n+1}(\varphi,\xi,\eta,g)$, a \textit{$D_a$-homothetic deformation} is a change of structure tensors of the form
\begin{equation}\label{deformation}
\tilde\varphi= \frac1a \varphi, \; \tilde\xi=\xi, \; \tilde\eta=a \eta, \; \tilde g = a g +a(a-1) \eta \otimes \eta,
\end{equation}
where $a$ is a positive constant. It is well known that $M^{2n+1}(\tilde\varphi,\tilde\xi,\tilde\eta,\tilde{g})$ is also a
contact metric manifold.

It was also proved in \cite{blair95} that the class of $(\kappa,\mu)$-spaces remains invariant under $D_a$-homothetic deformations. Indeed, applying one of these deformations to a $(\kappa,\mu)$-space yields a new $(\tilde\kappa,\tilde\mu)$-space, where
\[
\tilde\kappa =\frac{\kappa+a^2-1}{a^2} , \, \tilde\mu = \frac{\mu+2a-2}{a}.
\]

Many authors studied $(\kappa,\mu)$-spaces later, as can be seen in \cite{blairbook}. We highlight here the work of Boeckx, who gave in  \cite{boeckx2000} an explicit writing of the curvature tensor of these spaces:
\begin{equation}\label{Rkappamu}
\begin{aligned}
R(X,Y)Z &=\left(1-\frac\mu2 \right)(g(Y,Z)X-g(X,Z)Y)\\
        &+g(Y,Z)hX-g(X,Z)hY-g(hX,Z)Y+g(hY,Z)X\\
        &+\frac{1-\frac\mu2}{1-\kappa}(g(hY,Z)hX-g(hX,Z)hY)\\
        &-\frac\mu2 (g(\varphi Y,Z)\varphi X-g(\varphi X,Z)\varphi Y)+\mu g(\varphi X,Y)\varphi Z\\
        &+\frac{\kappa-\frac\mu2}{1-\kappa} (g(\varphi hY,Z)\varphi h X-g(\varphi hX,Z)\varphi h Y)\\
        &-\eta(X)\eta(Z)\left(\left(\kappa-1+\frac\mu2 \right)Y+(\mu-1)hY\right)\\
        &+\eta(Y)\eta(Z)\left(\left(\kappa-1+\frac\mu2 \right)X+(\mu-1)hX \right)\\
        &+\eta(X)\left(\left(\kappa-1+\frac\mu2 \right)g(Y,Z)+(\mu-1)g(hY,Z)\right)\xi\\
        &-\eta(Y)\left(\left(\kappa-1+\frac\mu2 \right)g(X,Z)+(\mu-1)g(hX,Z)\right)\xi.
\end{aligned}
\end{equation}
Boeckx \cite{boeckx2000} also classified the $(\kappa,\mu)$-spaces in terms of an invariant that he introduced: $I_M=\frac{1-\frac{\mu}2}{\sqrt{1-\kappa}}$. Indeed, he proved that if $M_1$ and $M_2$ are two non-Sasakian $(\kappa_i,\mu_i)$-spaces of the same dimension, then $I_{M_1}=I_{M_2}$ if and only if, up to a $D_a$-homothetic deformation, the two spaces are locally isometric as contact metric spaces. In particular, if both spaces are simply connected and complete, they are globally isometric up to a $D_a$-homothetic deformation.

It was also stated in paper \cite{boeckx2000} that ``it follows that we know all non-Sasakian $(\kappa,\mu)$-spaces locally as soon as we have, for every odd dimension $2n + 1$ and for every possible value for the invariant $I$, one $(\kappa,\mu)$-space $M$ with $I_M=I$.''
For $I > -1$, we have the unit tangent sphere bundle $T_1 M^n(c)$ of a space of constant curvature $c$ ($c\neq1$) for the appropriate $c$ (see \cite{blair95}). For $I \leq -1$, Boeckx presented in \cite{boeckx2000} the following examples for any possible odd dimension $2n + 1$ and value of $I$.

\begin{example}[\cite{boeckx2000}] \label{books}
Let $\mathfrak{g}$ be a $(2n+1)$-dimensional Lie algebra with basis $\{\xi,X_1,\dots,X_n,$ $Y_1,\dots,Y_n\}$ and the Lie brackets given by
\begin{equation} \label{brackets-books}
\begin{array}{rclrcl}
\left[\xi,X_1\right]  & = &  -\displaystyle{\frac{\alpha \beta}{2}X_2-\frac{\alpha^2}{2}Y_1}, & \left[Y_i,Y_j\right]  & = & 0, \quad i,j \neq 2, \\
\\
\left[\xi,X_2\right]  & = &  \displaystyle{\frac{\alpha \beta}{2}X_1-\frac{\alpha^2}{2}Y_2}, & \left[X_1,Y_1\right]  & = & -\beta X_2+2\xi, \\
\\
\left[\xi,X_i\right]  & = & -\displaystyle{\frac{\alpha^2}{2}Y_i}, \quad i\geq 3, & \left[X_1,Y_i\right]  & = & 0, \quad i\geq 2, \\
\\
\left[\xi,Y_1\right]  & = & \displaystyle{\frac{\beta^2}{2}X_1-\frac{\alpha\beta}{2}Y_2}, & \left[X_2,Y_1\right]  & = & \beta X_1-\alpha Y_2, \\
\\
\left[\xi,Y_2\right]  & = & \displaystyle{\frac{\beta^2}{2}X_2+\frac{\alpha\beta}{2}Y_1}, & \left[X_2,Y_2\right]  & = & \alpha Y_1+2\xi, \\
\\
\left[\xi,Y_i\right]  & = & \displaystyle{\frac{\beta^2}{2}X_i}, \quad i\geq 3, & \left[X_2,Y_i\right]  & = & \beta X_i, \quad i\geq 3,\\
\\
\left[X_1,X_i\right]  & = & \alpha X_i, \quad i\neq 1, & \left[X_i,Y_1\right]  & = & -\alpha Y_i, \quad i\geq 3,\\
\\
\left[X_i,X_j\right]  & = & 0, \quad i,j\neq 1,& \left[X_i,Y_2\right]  & = & 0, \quad i\geq 3, \\
\\
\left[Y_2,Y_i\right]  & = & \beta Y_i, \quad i\neq 2, & \left[X_i,Y_j\right]  & = & \delta_{ij}(-\beta X_2+\alpha Y_1+2\xi), \quad i,j\geq 3,
\end{array}
\end{equation}
for real numbers $\alpha$ and $\beta$.
Next we define a left-invariant contact metric structure $(\varphi,\xi,\eta,g)$ on the associated Lie group $G$ as follows:
\begin{itemize}
\item the basis $\{\xi, X_1,\ldots, X_n, Y_1,\ldots, Y_n \}$ is orthonormal,

\item the characteristic vector field is given by $\xi$,

\item the one-form $\eta$ is the metric dual of $\xi$,

\item the $(1, 1)$-tensor field $\varphi$ is determined by
\(
\varphi \xi=0, \; \varphi X_i = Y_i, \; \varphi Y_i =-X_i.
\)
\end{itemize}
It can also be proved that $G$ is a $(\kappa,\mu)$-space with
\[
\kappa= 1-\frac{(\beta^2-\alpha^2)^2}{16}, \; \mu=2+\frac{\alpha^2+\beta^2}{2}.
\]
Moreover, supposing $\beta^2 >\alpha^2$ gives us that $\lambda=\frac{\beta^2-\alpha^2}4 \neq0$ and thus the $(\kappa,\mu)$-space is not Sasakian. The orthonormal basis also satisfies that $hX_i=\lambda X_i$ and $hY_i=-\lambda Y_i$.

Finally, $I_G= -\frac{\beta^2+\alpha^2}{\beta^2-\alpha^2} \leq -1$, so for the appropriate choice of $\beta > \alpha  \geq 0$, $I_G$ attains any real value smaller than or equal to $-1$.
\end{example}

\medskip

Lastly, we will recall some formulas from submanifolds theory in order to fix our notation. Let $N$ be an $n$-dimensional submanifold isometrically immersed in an $m$-dimensional Riemannian manifold $(M,g)$. Then, the
Gauss and Weingarten formulas hold:
\begin{align}
\nabla_XY&=\overline\nabla_XY+\sigma(X,Y), \label{eqgauss} \\
\nabla_XV&=-A_VX+\nabla^{\perp}_XV, \label{eqwein}
\end{align}
for any tangent vector fields $X,Y$ and any normal vector field $V$. Here $\sigma$ denotes the \textit{second fundamental form}, $A$ the \textit{shape operator} and $\nabla^{\perp}$ the \textit{normal connection}. It is well known that the second fundamental form and the shape operator are related the following way:
\begin{equation} \label{eqsigma}
g(\sigma(X,Y),V)=g(A_VX,Y).
\end{equation}
We denote by $R$ and $\overline R$ the curvature tensors of $M$ and
$N$, respectively. They are related by Gauss and Codazzi's
equations
\begin{equation}\label{eqgauss2}
R(X,Y,Z,W)=\overline R(X,Y,Z,W)-g(\sigma(X,W),\sigma(Y,Z))+g(\sigma(X,Z),\sigma(Y,W)),
\end{equation}
\begin{equation}\label{eqcodazzi}
(R(X,Y)Z)^\perp =(\nabla _{X}\sigma)(Y,Z)-(\nabla_{Y}\sigma)(X,Z),
\end{equation}
respectively, where $R(X,Y)Z^{\perp}$ denotes the normal
component of $R(X,Y)Z$ and
\begin{equation}
(\nabla _{X}\sigma)(Y,Z)=\nabla^\perp _{X}(\sigma(Y,Z))
-\sigma(\overline\nabla _{X}Y,Z)-\sigma(Y,\overline\nabla _{X}Z).
\end{equation}

The submanifold $N$ is said to be \textit{totally geodesic} if the second
fundamental form $\sigma$ vanishes identically. It is said that it is \textit{totally umbilical} if there exists a normal vector field $V$ such that $\sigma(X,Y)=g(X,Y)V$, for any tangent vector fields $X,Y$. In fact, it can be proved that, in such a case, $V$ has to be the \textit{mean curvature} $\widetilde H=\frac1n\sum_{i=1}^n\sigma(e_i,e_i)$, where $\{e_1,\dots,e_n\}$ is a local orthonormal frame. It is clear that every totally geodesic submanifold is also totally umbilical but the converse is not true in general.

\section{Decomposition of the $h$ operator} \label{sec3}

Let $N$ be a Legendrian submanifold of a $(2n+1)$-dimensional $(\kappa,\mu)$-space $M$, that is, an $n$-dimensional submanifold such that $\xi$ is normal to $N$. Therefore, $\eta(X)=0$ for any tangent vector field $X$ and so it follows from \eqref{almostcontact} that $\varphi^2X=-X$. Moreover, it was proved in \cite{lotta} that $N$ is an anti-invariant submanifold, i.e., $\varphi X$ is normal for any tangent vector field $X$. Moreover, under our assumptions about the dimensions of $M$ and $N$, it holds that every normal vector field $V$ can be written as $\varphi X$, for a certain tangent vector field $X$.

Therefore, we can decompose the $h$ operator in the following way:
\begin{equation} \label{h1h2}
hX=h_1X+\varphi h_2X,
\end{equation}
for any tangent vector field $X$, where $h_1X$ (respectively $\varphi h_2X$) denotes the tangent (resp. normal) component of $hX$.

We can prove the following properties:
\begin{proposition}
Let $N$ be a Legendrian submanifold of a $(\kappa,\mu)$-space $M$. Then, $h_1$ and $h_2$ are symmetric operators that satisfy $h_1\xi=h_2\xi=0$ and equations
\begin{align}
h_1^2+h_2^2&=(1-\kappa)I,\label{propo11}\\
h_1h_2&=h_2h_1.\label{propo12}
\end{align}
\end{proposition}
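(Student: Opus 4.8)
The plan is to extract everything from the three structural facts recalled in the Preliminaries: that $h$ is $g$-symmetric with $h\xi=0$ and anticommutes with $\varphi$, that $h^{2}=(\kappa-1)\varphi^{2}$ (equation \eqref{eqh2}), and that along a Legendrian submanifold $N$ one has $\varphi^{2}X=-X$ for tangent $X$ while $\varphi$ interchanges the tangent and normal bundles of $N$. In particular $\varphi$ restricted to $TN$ is injective, and this is what lets me read off identities by separately comparing tangential and normal components of a vector in $T_{p}M$.

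First, the assertions $h_{1}\xi=h_{2}\xi=0$ are immediate: applying the decomposition \eqref{h1h2} to $\xi$ and using $h\xi=0$ gives $0=h\xi=h_{1}\xi+\varphi h_{2}\xi$; the tangential part gives $h_{1}\xi=0$ and the normal part gives $\varphi h_{2}\xi=0$, whence $h_{2}\xi=0$ by injectivity of $\varphi$ on $TN$. (Here $h_{1}$ and $h_{2}$ are understood as the tangential part, resp.\ $\varphi$ applied to a tangential vector, in the decomposition of $h$ acting on $TM$ restricted to $N$.)

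Next I would prove the symmetry. For $h_{1}$ it is direct: for tangent $X,Y$ the vector $\varphi h_{2}X$ is normal, so $g(h_{1}X,Y)=g(hX,Y)$, and combining this with $g(X,h_{1}Y)=g(X,hY)$ and the symmetry of $h$ gives $g(h_{1}X,Y)=g(X,h_{1}Y)$. For $h_{2}$ the argument is a little longer: using compatibility of $g$ and $\eta(X)=\eta(Y)=0$ one gets $g(h_{2}X,Y)=g(\varphi h_{2}X,\varphi Y)=g(hX,\varphi Y)$, the last step because $h_{1}X$ is tangent and hence orthogonal to the normal vector $\varphi Y$. Now apply in turn the symmetry of $h$, the relation $h\varphi=-\varphi h$, the decomposition of $hY$, and $\varphi^{2}=-\,\text{id}$ on $TN$, to rewrite $g(hX,\varphi Y)=g(X,h\varphi Y)=-g(X,\varphi hY)=g(X,h_{2}Y)$ (the term $\varphi h_{1}Y$ being normal, hence orthogonal to $X$). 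Thus $h_{2}$ is symmetric.

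Finally, for \eqref{propo11}--\eqref{propo12} I would compute $h(hX)$ for $X$ tangent using \eqref{h1h2} twice, together with $h\varphi=-\varphi h$ and $\varphi^{2}X=-X$: from $h(h_{1}X)=h_{1}^{2}X+\varphi h_{2}h_{1}X$ and $h(\varphi h_{2}X)=-\varphi h(h_{2}X)=-\varphi h_{1}h_{2}X+h_{2}^{2}X$ one obtains
\[
h^{2}X=(h_{1}^{2}+h_{2}^{2})X+\varphi\,(h_{2}h_{1}-h_{1}h_{2})X .
\]
On the other hand \eqref{eqh2} gives $h^{2}X=(\kappa-1)\varphi^{2}X=(1-\kappa)X$. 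Comparing tangential components yields \eqref{propo11}, and comparing normal components yields $\varphi(h_{2}h_{1}-h_{1}h_{2})X=0$, hence \eqref{propo12} by injectivity of $\varphi$ on $TN$. The only thing to watch throughout is the tangent/normal bookkeeping — that $\varphi$ exchanges $TN$ and its normal bundle and is injective on each — so there is no genuine obstacle; the mildly delicate point is just the chain of substitutions establishing the symmetry of $h_{2}$.
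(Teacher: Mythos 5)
Your proposal is correct and follows essentially the same route as the paper: decompose $h^2X$ by applying \eqref{h1h2} twice together with $h\varphi=-\varphi h$, compare with $h^2X=(1-\kappa)X$ from \eqref{eqh2}, and identify tangent and normal components, with the symmetry and $h_1\xi=h_2\xi=0$ statements read off from the corresponding properties of $h$. The only difference is that you spell out the tangent/normal bookkeeping for the symmetry of $h_1$ and $h_2$, which the paper simply asserts as a direct consequence of the symmetry of $h$ and the compatibility of $g$.
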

\begin{proof}
The symmetry of $h_1$ and $h_2$ can be directly obtained from that of $h$ and the compatibility of the metric $g$. Similarly, $h\xi=0$ implies $h_1\xi=h_2\xi=0$.

Furthermore, given a tangent vector field $X$, it follows from \eqref{almostcontact}, \eqref{h1h2} and the anticommutativity of $h$ and $\varphi$ that \begin{equation} \label{anticom}
h\varphi X=-\varphi hX=-\varphi h_1X+h_2X.
\end{equation}
Using \eqref{eqh2}, we have that $h^2X=(1-\kappa)X$. On the other hand, by virtue of \eqref{h1h2} and \eqref{anticom}, we obtain
\[
h^2X=h(h_1X+\varphi h_2X)=h_1^2X+\varphi h_2h_1X-\varphi h_1h_2X+h_2^2X.
\]
Joining both expressions for $h^2$ and identifying the tangent and normal parts give us equations \eqref{propo11} and \eqref{propo12}.
\end{proof}

\begin{proposition}
Let $N$ be a Legendrian submanifold of a $(\kappa,\mu)$-space $M$. Then, $h_1$ and $h_2$ satisfy
\begin{align}
(\overline\nabla_X h_1)Y&=-\varphi \sigma(X,h_2Y)-h_2\varphi \sigma(X,Y), \label{nablah1}\\
(\overline\nabla_X h_2)Y&=\; \; \, \varphi \sigma(X,h_1Y)+h_1\varphi \sigma(X,Y), \label{nablah2}
\end{align}
for any tangent vector fields $X,Y$.
\end{proposition}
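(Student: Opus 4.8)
The plan is to expand both sides of $(\nabla_X h)Y=\nabla_X(hY)-h(\nabla_X Y)$ for tangent vector fields $X,Y$ and compare them componentwise with respect to the orthogonal splitting of the ambient tangent bundle along $N$ into $TN$, the line $\mathbb{R}\xi$, and $\varphi(TN)$. Recall that, $N$ being Legendrian, this splitting is indeed orthogonal, that $\varphi(TN)\oplus\mathbb{R}\xi$ is the normal bundle, that $\varphi$ exchanges $TN$ and $\varphi(TN)$, that $\varphi\xi=0$, and that $\varphi^2=-\mathrm{id}$ on $TN$ and on $\varphi(TN)$.

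First I would compute the left-hand side from \eqref{nablah}. Since $\eta$ vanishes on $TN$ and $\varphi Z$ is normal for tangent $Z$, all the terms involving $\eta(X)$, $\eta(Y)$ or $g(X,\varphi Y)$ drop; writing $hY=h_1Y+\varphi h_2Y$ and using $\varphi^2=-\mathrm{id}$ on $TN$ gives $g(X,\varphi hY)=-g(X,h_2Y)$, hence $(\nabla_X h)Y=g(h_2X,Y)\,\xi$. What matters is only that this vector lies along $\xi$, i.e. that it has zero $TN$-component and zero $\varphi(TN)$-component.

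Next I would expand the right-hand side. From the Gauss formula \eqref{eqgauss}, $\nabla_X(h_1Y)=\overline\nabla_X(h_1Y)+\sigma(X,h_1Y)$; from $\nabla_X(\varphi h_2Y)=(\nabla_X\varphi)(h_2Y)+\varphi\nabla_X(h_2Y)$ together with \eqref{nablaphi} and the Gauss formula again, $\nabla_X(\varphi h_2Y)=(\text{multiple of }\xi)+\varphi\overline\nabla_X(h_2Y)+\varphi\sigma(X,h_2Y)$. For $h(\nabla_X Y)=h(\overline\nabla_X Y)+h\,\sigma(X,Y)$, the first term decomposes through \eqref{h1h2}, and the second is handled by $h\xi=0$ and the anticommutativity $h\varphi=-\varphi h$: since the $\varphi(TN)$-part of $\sigma(X,Y)$ must be $\varphi$ of the tangent vector $-\varphi\sigma(X,Y)$, one obtains $h\,\sigma(X,Y)=\varphi h_1\varphi\sigma(X,Y)-h_2\varphi\sigma(X,Y)$. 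Subtracting and taking the $TN$-component, the $\xi$- and $\varphi(TN)$-type summands disappear and one is left with $(\overline\nabla_X h_1)Y+\varphi\sigma(X,h_2Y)+h_2\varphi\sigma(X,Y)=0$, which is \eqref{nablah1}. For \eqref{nablah2} I would instead take the normal component of $\nabla_X(hY)-h(\nabla_X Y)=g(h_2X,Y)\xi$ and apply $\varphi$: the right-hand side becomes $0$ because $\varphi\xi=0$, while the left-hand side collapses, via $\varphi^2=-\mathrm{id}$ on $TN$, to $\varphi\sigma(X,h_1Y)-(\overline\nabla_X h_2)Y+h_1\varphi\sigma(X,Y)=0$.

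I expect the only delicate point to be the componentwise bookkeeping: correctly computing $h$ on the normal vector $\sigma(X,Y)$, and keeping track of exactly when $\varphi^2$ equals $-\mathrm{id}$ versus when the $\eta\otimes\xi$ correction in \eqref{almostcontact} is active. The $\xi$-components play no role in the two formulas, although verifying that they also match on both sides provides a reassuring consistency check.
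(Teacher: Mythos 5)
Your proposal is correct and follows essentially the same route as the paper: restrict \eqref{nablah} to tangent $X,Y$ to get $(\nabla_Xh)Y=g(X,h_2Y)\xi$, expand $\nabla_X(h_1Y+\varphi h_2Y)-h\nabla_XY$ via the Gauss/Weingarten formulas and \eqref{nablaphi}, handle $h\sigma(X,Y)$ through $\varphi h\varphi\sigma(X,Y)=\varphi h_1\varphi\sigma(X,Y)-h_2\varphi\sigma(X,Y)$, and identify tangential and normal (i.e.\ $\varphi(TN)$) components. The only cosmetic difference is that the paper first records the intermediate identities $A_{\varphi Y}X=-\varphi\sigma(X,Y)$ and $\nabla^\perp_X\varphi Y=\varphi\overline\nabla_XY+g(X,Y+h_1Y)\xi$, whereas you apply the same computation directly to $\varphi h_2Y$.
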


\begin{proof}
It follows from Gauss and Weingarten formulas \eqref{eqgauss} and \eqref{eqwein} that
\[
(\nabla_X\varphi)Y=\nabla_X\varphi Y-\varphi \nabla_XY=-A_{\varphi Y}X+\nabla^{\perp}_X\varphi Y-\varphi\overline\nabla_XY-\varphi \sigma(X,Y),
\]
for any tangent vector fields $X,Y$. Therefore, by using \eqref{nablaphi} and identifying the tangent and normal components, we obtain:
\begin{align}
A_{\varphi Y}X &=-\varphi\sigma(X,Y), \label{wein1} \\
\nabla^{\perp}_X\varphi Y &=\varphi\overline\nabla_XY+g(X,Y+h_1Y)\xi. \label{wein2}
\end{align}

On the other hand, using \eqref{nablah} and \eqref{h1h2}, we have
\[
\nabla_X(h_1Y+\varphi h_2Y)-h(\nabla_X Y)=g(X,h_2Y)\xi,
\]
from where, by virtue of Gauss and Weingarten formulas \eqref{eqgauss} and \eqref{eqwein}, we deduce
\begin{equation} \label{wein3}
\overline\nabla_Xh_1Y+\sigma(X,h_1Y)-A_{\varphi h_2Y}X+\nabla^{\perp}_X\varphi h_2Y-h\overline\nabla_XY-h\sigma(X,Y)=g(X,h_2Y)\xi.
\end{equation}
We can put $h\overline\nabla_XY=h_1\overline\nabla_XY+\varphi h_2\overline\nabla_XY$ by \eqref{h1h2}. Now, by using \eqref{almostcontact}, we can write $\sigma(X,Y)=-\varphi^2\sigma(X,Y)+\eta(\sigma(X,Y))\xi$, and hence
$h\sigma(X,Y)=-h\varphi^2\sigma(X,Y)=\varphi h \varphi\sigma(X,Y)$. Again, equation \eqref{h1h2} gives us $h\sigma(X,Y)=\varphi h_1\varphi\sigma(X,Y)-h_2\varphi\sigma(X,Y)$. Therefore, if we substitute these two expressions, together with \eqref{wein1} and \eqref{wein2}, in \eqref{wein3}, we obtain:
\begin{equation} \label{wein4}
\begin{aligned}
\overline\nabla_Xh_1Y &+\sigma(X,h_1Y)+\varphi\sigma(X,h_2Y)+\varphi\overline\nabla_Xh_2Y+g(X,h_2Y+h_1h_2Y)\xi \\
&-h_1\overline\nabla_XY-\varphi h_2\overline\nabla_XY
-\varphi h_1\varphi\sigma(X,Y)+h_2\varphi\sigma(X,Y)=g(X,h_2Y)\xi.
\end{aligned}
\end{equation}
By identifying the tangent and normal parts of \eqref{wein4}, equations \eqref{nablah1} and \eqref{nablah2} hold.
\end{proof}

It is clear that, if we multiply \eqref{wein4} by $\xi$, then we obtain
\[
g(\sigma(X,h_1Y),\xi)+g(X,h_1h_2Y)=0,
\]
for any tangent vector fields $X,Y$. In fact, we can prove a more general result, which will be very useful in the proof of our main theorems:

\begin{lemma}
Let $N$ be a Legendrian submanifold of a $(\kappa,\mu)$-space $M$. Then,
\begin{equation} \label{eqsigma2}
g(\sigma(X,Y),\xi)+g(X,h_2Y)=0,
\end{equation}
for any tangent vector fields $X,Y$.
\end{lemma}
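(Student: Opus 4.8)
The plan is to compute the quantity $g(\sigma(X,Y),\xi)$ directly from the Weingarten formula applied to the normal vector field $\xi$, using the fundamental relation $\nabla_Z\xi = -\varphi Z - \varphi h Z$ from \eqref{eq-h}. Since $\xi$ is normal to $N$, equation \eqref{eqwein} gives $\nabla_X\xi = -A_\xi X + \nabla^\perp_X\xi$, and by \eqref{eqsigma} we have $g(A_\xi X, Y) = g(\sigma(X,Y),\xi)$. So it suffices to understand the tangential component of $\nabla_X\xi$.

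First I would substitute \eqref{eq-h} to write $\nabla_X\xi = -\varphi X - \varphi h X$. For a Legendrian submanifold $\varphi X$ is normal for every tangent $X$ (as recalled in Section \ref{sec3}), so $-\varphi X$ contributes nothing to the tangential part. For the second term, the decomposition \eqref{h1h2} gives $hX = h_1X + \varphi h_2 X$ with $h_1 X$ tangent and $\varphi h_2 X$ normal, hence $\varphi h X = \varphi h_1 X + \varphi^2 h_2 X = \varphi h_1 X - h_2 X$, using $\varphi^2 = -I$ on tangent vectors. Here $\varphi h_1 X$ is normal and $-h_2 X$ is tangent. Therefore the tangential part of $\nabla_X\xi$ is exactly $h_2 X$, i.e. $A_\xi X = -h_2 X$. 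Pairing with $Y$ and invoking \eqref{eqsigma} yields $g(\sigma(X,Y),\xi) = g(A_\xi X, Y) = -g(h_2 X, Y) = -g(X, h_2 Y)$, where the last equality uses the symmetry of $h_2$; this is precisely \eqref{eqsigma2}.

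There is no real obstacle here: the statement is essentially a bookkeeping consequence of \eqref{eq-h}, the anti-invariance of Legendrian submanifolds, and the definition \eqref{h1h2}. The only point requiring a little care is keeping track of which summands are tangent and which are normal when expanding $\varphi h X$, and remembering that on tangent vectors $\varphi^2 = -I$ (rather than $-I + \eta\otimes\xi$), which is valid precisely because $\eta$ vanishes on $TN$. As a sanity check, one recovers the weaker identity noted just before the lemma — namely $g(\sigma(X,h_1Y),\xi) + g(X,h_1h_2Y) = 0$ — by replacing $Y$ with $h_1 Y$ in \eqref{eqsigma2} and using $h_1 h_2 = h_2 h_1$ from \eqref{propo12}, which confirms consistency with the computation extracted from \eqref{wein4}.
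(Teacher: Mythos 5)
Your proof is correct and follows essentially the same route as the paper: the Weingarten formula for the normal field $\xi$ together with \eqref{eqsigma}, then $\nabla_X\xi=-\varphi X-\varphi hX$ from \eqref{eq-h}, the anti-invariance of $N$, and the decomposition \eqref{h1h2} to isolate the tangential part $h_2X$, concluding via the symmetry of $h_2$. The consistency check with the identity preceding the lemma is a nice, if unnecessary, addition.
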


\begin{proof}
It follows from Weingarten equation \eqref{eqwein} and from \eqref{eqsigma} that
\[
g(X,\nabla_X\xi)+g(\sigma(X,Y),\xi)=0,
\]
for any tangent vector fields $X,Y$. Then, it is enough to use \eqref{almostcontact}, \eqref{eq-h} and \eqref{h1h2} to obtain \eqref{eqsigma2}.
\end{proof}

\section{Examples} \label{sec4}

We will present in this section some examples of totally geodesic and totally umbilical Legendrian submanifolds of the $(\kappa,\mu)$-spaces of Example \ref{books}. Let us begin with the totally geodesic ones.

\begin{example} \label{exx}
Let $M$ be a $(\kappa,\mu)$-space from Example \ref{books} with invariant $I_M\leq -1$. Then, the distribution $\mathcal{D}$ spanned by $\{X_1,\dots,X_n\}$ is involutive and any integral submanifold $N$ of it is a totally geodesic submanifold of $M$. Indeed, the involutive condition can be easily checked from \eqref{brackets-books}. In order to prove the totally geodesic one, it is enough to show that $\nabla_{X_i}X_j\in \mathcal{D}$, for any $i,j=1,\dots,n$, where $\nabla$ denotes the Levi-Civita connection on $M$. In fact, in can be directly computed that:
\begin{equation} \label{nablax}
\begin{aligned}
& \nabla_{X_1}X_1=\nabla_{X_1}X_2=0, \quad \nabla_{X_2}X_1=-\alpha X_2, \quad \nabla_{X_2}X_2=\alpha X_1, \\
& \nabla_{X_1}X_i=\nabla_{X_2}X_i=0, \mbox{ for any } i=3,\dots,n, \\
& \nabla_{X_i}X_1=-\alpha X_i, \quad \nabla_{X_i}X_2=0, \quad \nabla_{X_i}X_j=\delta_{ij}\alpha X_1, \mbox{ for any }i,j=3,\dots,n.
\end{aligned}
\end{equation}
Moreover, since $hX_i=\lambda X_i$ for any $i=1,\dots,n$, then $TN=E_M(\lambda)$.
\end{example}

\

\begin{example} \label{exy}
Let $M$ be a $(\kappa,\mu)$-space from Example \ref{books} with invariant $I_M\leq -1$. Then, the distribution $\mathcal{D}$ spanned by $\{Y_1,\dots,Y_n\}$ is also involutive and any integral submanifold $N$ of it is a totally geodesic submanifold of $M$. Indeed, both conditions can be checked the same way as in Example \ref{exx}, by taking now into account that:
\begin{equation} \label{nablay}
\begin{aligned}
& \nabla_{Y_1}Y_1=\beta Y_2, \quad \nabla_{Y_1}Y_2=-\beta Y_1, \quad \nabla_{Y_2}Y_1=\nabla_{Y_2}Y_2=0, \\
& \nabla_{Y_1}Y_i=\nabla_{Y_2}Y_i=0, \mbox{ for any } i=3,\dots,n, \\
& \nabla_{Y_i}Y_1=0, \quad \nabla_{Y_i}Y_2=-\beta Y_i, \quad \nabla_{Y_i}Y_j=\delta_{ij}\beta Y_2, \mbox{ for any }i,j=3,\dots,n.
\end{aligned}
\end{equation}
In this case, since $hY_i=-\lambda Y_i$ for any $i=1,\dots,n$, then $TN=E_M(-\lambda)$.
\end{example}

\

\begin{example} \label{exxy}
Let $M$ be a $(\kappa,\mu)$-space from Example \ref{books} with invariant $I_M\leq -1$. Then, the distribution $\mathcal{D}$ spanned by $\{X_1,Y_2,Z_3,\dots,Z_n\}$, where $Z_i$ is either $X_i$ or $Y_i$, for any $i=3,\dots,n$, is also involutive and any integral submanifold $N$ of it is a totally geodesic submanifold of $M$. Indeed, both conditions can be checked the same way as in Examples \ref{exx} and \ref{exy}, by using now \eqref{nablax}, \eqref{nablay} and the following formulas:
\begin{equation} \label{nablaz}
\begin{aligned}
& \nabla_{X_1}Y_i=0 \mbox{ for any }i=2,\dots,n, \\
& \nabla_{Y_2}X_i=0 \mbox{ for any }i=1,3,\dots,n, \\
& \nabla_{X_i}Y_2=\nabla_{Y_i}X_1=0 \mbox{ for any } i=3,\dots,n, \\
& \nabla_{X_i}Y_j=\nabla_{Y_i}X_j=0 \mbox{ for any }i,j=3,\dots,n, \mbox{ such that } i\neq j.
\end{aligned}
\end{equation}
Finally, if we define $E(\pm\lambda):=E_M(\pm \lambda) \cap N$, we can write
$TN=E(\lambda)\oplus E(-\lambda)$, with $\dim{E(\lambda)}=k$ (respectively $\dim{E(-\lambda)}=n-k$), where $k-1$ (resp. $n-k-1$) is the number of $Z_i$ such that $Z_i=X_i$ (resp. $Z_i=Y_i$). Therefore, we can obtain an example for any value of $k$ from $1$ to $n-1$.
\end{example}

We now present the family of totally umbilical examples:

\begin{example} \label{excd}
Let $M$ be a $(\kappa,\mu)$-space from Example \ref{books} with invariant $I_M\leq -1$. Then, the distribution $\mathcal{D}$ spanned by $\{cX_1+dY_1,\dots,cX_n+dY_n\}$, with $c,d$ non-zero constants, is involutive and any integral submanifold $N$ of it is a totally umbilical submanifold of $M$. Indeed, the involutive condition can be easily checked from \eqref{brackets-books}. In order to prove the totally umbilical one, we will first show that
$\sigma(cX_i+dY_i,cX_j+dY_j)=2\delta_{ij}cd\lambda\xi$
 by checking that the Levi-Civita connection on $M$ satisfies $\nabla_{cX_i+dY_i}(cX_j+dY_j)=Z+ 2\delta_{ij} cd\lambda  \xi $, with $Z \in \mathcal{D}$, for any $i,j=1,\dots,n$. In fact, it can be directly computed that:
\begin{equation*}
\begin{aligned}
\nabla_{cX_1+dY_1}(cX_1+dY_1)   &=\beta d(cX_2+dY_2)+2cd\lambda \xi , \\ \nabla_{cX_1+dY_1}(cX_2+dY_2)   &=-\beta d(cX_1+dY_1),\\
\nabla_{cX_2+dY_2}(cX_1+dY_1)  &=-\alpha c(cX_2+dY_2) ,             \\
\nabla_{cX_2+dY_2}(cX_2+dY_2)   &=\alpha c (cX_1+dY_1)+2cd\lambda \xi,\\
\nabla_{cX_1+dY_1}(cX_j+dY_j)   &=\nabla_{cX_2+dY_2}(cX_j+dY_j)=0, \text{ for any }j=3,\dots,n, \\
 \nabla_{cX_i+dY_i}(cX_1+dY_1) &=-\alpha c (cX_i+dY_i) ,  \\
 \nabla_{cX_i+dY_i}(cX_2+dY_2) & =-\beta d (cX_i+dY_i),  \text{ for any }i=3,\dots,n,\\
\nabla_{cX_i+dY_i}(cX_j+dY_j)&=\delta_{ij}
(\alpha c (cX_1+dY_1)+\beta d (cX_2+dY_2)+2cd\lambda \xi),\\
& \quad \text{ for any }i,j=3,\dots,n.
\end{aligned}
\end{equation*}
Therefore, we can write $\sigma(cX_i+dY_i,cX_j+dY_j)=g(cX_i+dY_i, cX_j+dY_j)\frac{2 cd\lambda}{c^2+d^2} \xi$ and, since $\frac{2 cd\lambda}{c^2+d^2} \xi\neq0$, the submanifold is totally umbilical but not totally geodesic.

Finally, we observe that $cX_i+dY_i$, $i=1,\ldots,n$, is not an eigenvector of $h$.
\end{example}

\section{Main results} \label{sec5}

\begin{theorem} \label{theo51}
Let $N$ be a Legendrian submanifold of a $(2n+1)$-dimensional $(\kappa,\mu)$-space $M$, with $\kappa<1$ and $I_M\leq -1$. If $N$ is totally geodesic, then, up to local isometries, it must be one of the submanifolds given in Examples \ref{exx}, \ref{exy} or \ref{exxy}.
\end{theorem}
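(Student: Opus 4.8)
The plan is to squeeze from $\sigma\equiv 0$ strong algebraic information about $h$ along $N$, then reduce the ambient space to the explicit Lie–group model and identify $TN$ with one of three configurations. First I would use Lemma~\eqref{eqsigma2}: with $\sigma\equiv 0$ it gives $g(X,h_2Y)=0$ for all tangent $X,Y$, hence $h_2=0$. By \eqref{h1h2} this means $hX=h_1X$ is tangent for every tangent $X$, so $TN$ is $h$-invariant and \eqref{propo11} reduces to $h_1^2=(1-\kappa)I$ on $TN$. Since $\kappa<1$, $h_1$ has exactly the two constant eigenvalues $\pm\lambda$ with $\lambda=\sqrt{1-\kappa}>0$; writing $V_\pm$ for the eigenbundles, $TN=V_+\oplus V_-$ orthogonally with $V_\pm\subseteq E(\pm\lambda)$, and I set $k=\dim V_+$. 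Next, \eqref{nablah1} with $\sigma=0$ yields $\overline\nabla h_1=0$, so $V_+$ and $V_-$ are parallel; by the local de Rham theorem $N$ splits locally as a Riemannian product $N_+\times N_-$, with $N_\pm$ totally geodesic in $N$ — hence in $M$ — and $TN_\pm=V_\pm$.

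I would then locate $V_\pm$ inside $M$. Plugging pairs of vectors from $E(\lambda)$ (resp.\ $E(-\lambda)$) into \eqref{nablah}, and using $E(\lambda)\perp E(-\lambda)$ together with $\varphi E(\pm\lambda)=E(\mp\lambda)$, one obtains $\nabla_XY\in E(\pm\lambda)$ whenever $X,Y\in E(\pm\lambda)$; thus $E(\lambda)$ and $E(-\lambda)$ are integrable, totally geodesic distributions, and by \eqref{sectional} their leaves $L_\pm$ have constant sectional curvature $c_\pm=2(1\pm\lambda)-\mu$. The hypothesis $I_M\leq -1$ is equivalent to $\mu\geq 2(1+\lambda)$, so $c_+\leq 0$ and $c_-=c_+-4\lambda<0$; hence $L_+$ is Euclidean or real hyperbolic and $L_-$ is real hyperbolic. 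Since $V_\pm\subseteq E(\pm\lambda)$, each $N_\pm$ is a totally geodesic submanifold of a leaf $L_\pm$. Finally, as $N$ is anti-invariant, $\varphi(TN)\perp TN$ becomes, after the splitting, $\varphi V_+\perp V_-$ inside $E(-\lambda)$, and a dimension count forces $V_-=(\varphi V_+)^\perp\cap E(-\lambda)$.

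To identify $N$ with one of the examples I would pass to the model. For $I_M\leq -1$ one can choose $\beta>\alpha\geq 0$ so that the group $G=G(\alpha,\beta)$ of Example~\ref{books} has precisely the same $(\kappa,\mu)$, so by Boeckx's result $M$ is locally isometric, as a contact metric space, to $G$ (no genuine $D_a$-homothetic deformation is needed, and such a deformation would in any case map the families of examples among themselves); I may thus assume $M=G$, with $E(\lambda)=\operatorname{span}\{X_1,\dots,X_n\}$, $E(-\lambda)=\operatorname{span}\{Y_1,\dots,Y_n\}$, and $L_\pm$ the corresponding subgroups. If $k=n$ then $TN=E(\lambda)$ and $N$ is an integral submanifold of $E(\lambda)$, i.e.\ Example~\ref{exx}; if $k=0$, symmetrically it is Example~\ref{exy}. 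For $0<k<n$ I would run the condition $\nabla_XY\in TN$ through the explicit connection \eqref{nablax}, \eqref{nablay}, \eqref{nablaz} and the cross derivatives $\nabla_{X_i}Y_j$ (computed as in Example~\ref{excd}), normalizing by the isometries of $G$ induced by left translations and by orthogonal transformations/permutations of the indices $3,\dots,n$; I expect this to force, up to such an isometry, $V_+=\operatorname{span}(\{X_1\}\cup\{X_i:i\in S\})$ and $V_-=\operatorname{span}(\{Y_2\}\cup\{Y_i:i\notin S\})$ for some $S\subseteq\{3,\dots,n\}$ with $|S|=k-1$ — that is, Example~\ref{exxy}.

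The main obstacle, I expect, is precisely this last case. The conditions already at hand — $N_\pm$ totally geodesic in $L_\pm$ and $\varphi V_+\perp V_-$ — do not by themselves determine $V_+$ (in $\mathbb{H}^n$ every subspace is tangent to some totally geodesic submanifold), so one must extract the remaining totally‑geodesic equations, which couple $V_+$ and $V_-$ through the ambient structure, to see that the distinguished directions $X_1$ and $Y_2$ must lie in $V_+$ and $V_-$ respectively and that ``mixed'' index choices are excluded. Moreover the degenerate value $\alpha=0$ (equivalently $I_M=-1$), for which $L_+$ is flat and carries many more totally geodesic submanifolds, has to be treated separately, bringing each of its configurations into the above normal form by a further isometry of $G$.
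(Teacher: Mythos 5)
The algebraic half of your argument is exactly the paper's: \eqref{eqsigma2} gives $h_2=0$, hence $h=h_1$ tangent with $h_1^2=(1-\kappa)I$, the splitting $TN=E(\lambda)\oplus E(-\lambda)$ with $\dim E(\lambda)=k$, $\overline\nabla h_1=0$ from \eqref{nablah1}, parallel eigendistributions, the local de Rham splitting, and the curvature signs coming from $I_M\le-1$. The genuine gap is in the identification step, and you have in fact flagged it yourself: for $0<k<n$ you only say you ``expect'' the explicit connection computations to force $V_+$ and $V_-$ into the standard position, and you note that the conditions you have in hand do not determine $V_+$. That is precisely the part of the theorem that needs a proof, and your proposed route has an additional structural problem: testing $\nabla_XY\in TN$ against the left-invariant frame of $G$ tacitly treats $TN$ as spanned by fixed combinations of the left-invariant fields along $N$, which is essentially the conclusion to be established; without some rigidity input, the pointwise data ($N_\pm$ totally geodesic in leaves of $E(\pm\lambda)$, $\varphi V_+\perp V_-$) does not pin down the submanifold.

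The paper closes this gap by a congruence argument that makes the whole exclusion problem disappear. Since $N$ is Legendrian, $E_e(\lambda)=V_+\oplus\varphi V_-$ by a dimension count, so the initial data of the given immersion $F$ and of the model immersion $G$ (an integral submanifold of $\mathrm{span}\{X_1,X_3,\dots,X_{k+1},Y_2,Y_{k+2},\dots,Y_n\}$, i.e. Example \ref{exxy}) each produce an orthonormal basis of $E_e(\lambda)$. Theorem 3 of \cite{boeckx2000} then yields a structure-preserving isometry $H$ of $M$ fixing $e$ and carrying one basis to the other; since $H$ commutes with $\varphi$ it matches the full tangent spaces, so $d(H\circ F)=dG$ at $p_0$. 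Because both immersions are totally geodesic, every $N$-geodesic through $p_0$ is sent by $H\circ F$ and by $G$ to ambient geodesics with the same initial conditions, hence $H\circ F=G$ locally. In particular, ``mixed'' choices of tangent data are not excluded but are congruent to the standard ones (different choices of $Z_i$ in Example \ref{exxy} are equivalent), and the degenerate case $\alpha=0$ ($I_M=-1$) needs no separate treatment. So your proposal is correct up to the de Rham splitting but does not contain the key idea (ambient isometry matching the initial frame plus uniqueness of geodesics) that actually proves the classification; as written, the case $0<k<n$ remains open.
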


\begin{proof}
Since the submanifold $N$ is totally geodesic, if follows directly from \eqref{eqsigma2} that $h_2=0$ and so $h|_N=h_1$ and $h_1^2=(1-\kappa)I$ (see \eqref{h1h2} and \eqref{propo11}). The operator $h_1$ is differentiable and symmetric, so it is diagonalisable and it has two eigenvalues $\pm\lambda=\pm \sqrt{1-\kappa}$, which are distinct and constant everywhere.

Let us denote by $E(\lambda)$ and $E(-\lambda)$ the eigenspaces of $h_1$ in $TN$ and by $k$ the dimension of $E(\lambda)$. This means that $\dim(E(-\lambda))=n-k$ (because $\dim N=n$) and that  $k\in\{0,\dots,n\}$. The multiplicities of both eigenspaces must be the same at every point because the coefficients of the characteristic polynomial are differentiable. Indeed, the characteristic polynomial of $h_1$ is completely determined by $k$ (thus, for different indices $k$, we get a different characteristic polynomial). Since $k$ is an integer, it is impossible by continuity to go from one to the other one, thus the eigendistributions are differentiable. We can then write
\begin{equation} \label{decomp}
TN=E(\lambda)\oplus E(-\lambda),
\end{equation}
where $\dim(E(\lambda))=k$ and $\dim(E(-\lambda))=n-k$, for a certain $k\in\{0,\dots,n\}$.

Moreover, we deduce from \eqref{nablah1} that $\overline\nabla h_1=0$. Therefore, it is straightforward to check that, if $Y_{\lambda} \in E(\lambda)$, then $\overline \nabla_X Y_{\lambda} \in E(\lambda)$, for every tangent vector field $X$. Similarly, if $Y_{-\lambda} \in E(-\lambda)$, then $\overline \nabla_X Y_{-\lambda} \in E(-\lambda)$. Thus, $E(\lambda)$ and $E(-\lambda)$ are parallel and hence involutive. By virtue of
Theorem 5.4 of \cite{KN}, $N$ can be locally decomposed as $M_1\times M_2$, where $M_1$ and $M_2$ are leaves of the distributions $E(\lambda)$ and $E(-\lambda)$, respectively. Furthermore, it follows from \eqref{sectional} that, if $\dim{M_1}\geq 2$ (resp. $\dim{M_2}\geq 2$), then $M_1$ (resp. $M_2$) has constant curvature $2(1+\lambda)-\mu=2\lambda(I_M+1)\leq 0$ (resp. $2(1-\lambda)-\mu=2\lambda(I_M-1)< 0$).

Recall that we have examples of submanifolds with decomposition \eqref{decomp} for every value of $k$. Indeed, see Example \ref{exx} for $k=n$, Example \ref{exy} for $k=0$ and Example \ref{exxy} for any value of $k$ from $1$ to $n-1$. Now, we will prove that any example must be one of these, up to local isometries.

Let us denote by $F:N^n\to M^{2n+1}(\kappa,\mu)$ the immersion of $N$ into $M$. Since $\kappa<1$ and $I_M\leq -1$, we can suppose that, locally, $M^{2n+1}(\kappa,\mu)$ is one of the Lie groups from Example \ref{books}. Thus, it is homogeneous and we can fix a point $p_0 \in N$ such that $F(p_0)=e$, where $e$ is the neutral element of the group.

We will give the explicit details when $2 \le k \le n-2$. The other cases can be done in a similar way. We have that $N = M_1( 2 \lambda (I_M+1)) \times M_2 (2 \lambda (I_M-1))$ and we also identify $N$ with its image as the (totally geodesic) integral submanifold through $e$ of the distribution spanned by $X_1,X_3,\dots,X_{k+1}, Y_2, Y_{k+2},\dots Y_n$. We denote by $G$ the latter immersion of $N$ and we pick an orthonormal basis $\{e_1,\ldots,e_n\}$ at the point $p_0$ of $N$, with $G(p_0)=e$, such that $E_{p_0}(\lambda)=\langle e_1(p_0),\ldots,e_k(p_0) \rangle$, $E_{p_0}(-\lambda)=\langle e_{k+1}(p_0),\ldots,e_n(p_0) \rangle$ and
\begin{align*}
&dG(e_1(p_0))= X_1(e),\\
&dG(e_j(p_0))=X_{j+1}(e), \quad j=2,\dots,k,\\
&dG(e_{k+1}(p_0)) =Y_2(e),\\
&dG(e_j(p_0)) = Y_j(e),\quad j=k+2,\dots,n,
\end{align*}

Note that by construction both
\[
X_1(e),X_3(e),\dots,X_{k+1}(e), \varphi Y_2(e),\varphi Y_{k+2}(e),\dots, \varphi Y_n(e)
\]
and
\[
dF(e_1(p_0)),\dots,dF(e_k(p_0)),\varphi dF(e_{k+1}(p_0)),\dots,\varphi dF(e_{n}(p_0))
\]
are basis of $E_e(\lambda)$. So, in view of Theorem 3 of \cite{boeckx2000}, there exists an isometry $H$ of $M^{2n+1}(\kappa,\mu)$ preserving the structure such that $H(e)=e$ and $H$ maps one basis of $E_e(\lambda)$ into the other one.
As a consequence, we have that $H \circ F(e) = G(e)$ and $d(H \circ F)(e_i) = dG(e_i)$.

We now take a geodesic $\gamma$ in $N$ through the point $p_0$. Since $N$ is totally geodesic, both with respect to the immersions $H \circ F$ and $G$, the curves $H \circ F(\gamma)$ and $G(\gamma)$ are both geodesics in $M^{2n+1}(\kappa,\mu)$ through $e$. Since $d(H \circ F)(e_i) = dG(e_i)$, they are also determined by the same initial conditions. Therefore, both curves need to coincide, so  $H \circ F(\gamma(s))= G(\gamma(s))$ for all $s$ and thus $F$ and $G$ are congruent.
\end{proof}

\begin{theorem} \label{theo52}
Let $N$ be a Legendrian submanifold of a $(2n+1)$-dimensional $(\kappa,\mu)$-space $M$, with $n\geq 3$, $\kappa<1$ and $I_M\leq -1$. If $N$ is totally umbilical (but not totally geodesic), then, up to local isometries, it must be one of the submanifolds given in Example \ref{excd}.
\end{theorem}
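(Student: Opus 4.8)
The plan is to exploit total umbilicity to force the tangential and normal parts $h_1,h_2$ of the $h$-operator, together with the mean curvature, into a rigid algebraic form, and then to match that form with Example~\ref{excd} by a homogeneity-plus-ODE argument in the spirit of the proof of Theorem~\ref{theo51}.

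First I would write $\sigma(X,Y)=g(X,Y)\widetilde H$ with $\widetilde H\not\equiv 0$ (since $N$ is not totally geodesic), and decompose $\widetilde H=f\,\xi+\varphi Z$ for a tangent vector field $Z$ and the function $f=g(\widetilde H,\xi)$. Substituting this into \eqref{eqsigma2} gives $h_2=-f\,\mathrm{Id}$ on $TN$, so \eqref{propo11} yields $h_1^2=(1-\kappa-f^2)\,\mathrm{Id}$ (in particular $f^2\le 1-\kappa$). Feeding the umbilicity condition into \eqref{nablah1} and \eqref{nablah2} and using $\varphi\widetilde H=-Z$, one obtains
\[
(\overline\nabla_X h_1)Y=-2f\,g(X,Y)\,Z,\qquad (Xf)\,Y=g(X,h_1Y)\,Z+g(X,Y)\,h_1Z .
\]
Since $(X,Y,W)\mapsto g((\overline\nabla_X h_1)Y,W)$ is symmetric in $Y$ and $W$ ($h_1$ being symmetric and $g$ parallel), the first relation forces $f\,g(X,Y)g(Z,W)=f\,g(X,W)g(Z,Y)$; choosing $W=Z$ and $X=Y$ a unit vector orthogonal to $Z$ (possible as $n\ge 3$) gives $fZ=0$, so $Z=0$ wherever $f\neq 0$.

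The crux of the argument, which I expect to be the main obstacle, is to upgrade this to the statement that \emph{$f$ is a nonzero constant, $Z\equiv 0$, and $h_1=\tilde a\,\mathrm{Id}$ for a constant $\tilde a$ with $\tilde a^2+f^2=1-\kappa$.} On $\{f\neq 0\}$ we know $Z=0$, so the second relation above gives $Xf=0$; together with continuity of $f$ this makes each connected component of $\{f\neq 0\}$ open and closed, so by connectedness of $N$ the set $\{f\neq 0\}$ is either empty or all of $N$. The case $f\equiv 0$ is excluded because then $h_2=0$, $\widetilde H=\varphi Z$, and the second relation reads $g(X,h_1Y)Z+g(X,Y)h_1Z=0$; evaluating this at a point and using that $h_1^2=(1-\kappa)\,\mathrm{Id}$ with $1-\kappa\neq 0$, one gets $Z=0$ pointwise, hence $\widetilde H\equiv 0$, contradicting that $N$ is not totally geodesic. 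Thus $f$ is a nonzero constant, $Z\equiv 0$, $\widetilde H=f\xi$, and the first relation becomes $\overline\nabla h_1=0$. Now $h_1$ is parallel with $h_1^2=(1-\kappa-f^2)\,\mathrm{Id}$; if it were not a constant multiple of the identity it would have the two distinct eigenvalues $\pm a$, $a=\sqrt{1-\kappa-f^2}>0$, each occurring, and parallelism of $h_1$ would force every curvature operator $\overline R(u,v)$ to commute with $h_1$, hence to preserve $E_1(-a)$; then $\overline R(u,v)v\in E_1(-a)$ would be orthogonal to $u\in E_1(a)$, giving $\overline K(u,v)=0$. But the Gauss equation \eqref{eqgauss2} together with \eqref{Rkappamu} and the relations $\widetilde H=f\xi$, $h_1^2=a^2\,\mathrm{Id}$, $h_2=-f\,\mathrm{Id}$ give, for orthonormal such $u,v$,
\[
\overline K(u,v)=\frac{2-\mu}{1-\kappa}\,f^2=\frac{2I_M}{\sqrt{1-\kappa}}\,f^2\neq 0 ,
\]
since $I_M\le -1$ and $f\neq 0$ — a contradiction. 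Hence $h_1=\tilde a\,\mathrm{Id}$, and therefore $hX=\tilde a\,X-f\,\varphi X$ for every tangent vector field $X$.

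It remains to identify $N$ with Example~\ref{excd}. Since $f\neq 0$ we have $|\tilde a|<\sqrt{1-\kappa}=\lambda$, so one can choose nonzero constants $c,d$ with $\frac{(c^2-d^2)\lambda}{c^2+d^2}=\tilde a$ and $\frac{2cd\lambda}{c^2+d^2}=f$ (the sign of $d$ being used to match the sign of $f$); for these, the submanifold $N'$ of Example~\ref{excd} is totally umbilical with mean curvature $f\xi$ and also satisfies $hX=\tilde a X-f\varphi X$ along $TN'$. As in the proof of Theorem~\ref{theo51}, since $\kappa<1$ and $I_M\le -1$ we may assume $M$ is, locally, one of the Lie groups of Example~\ref{books}; denoting by $F$ the immersion of $N$ and using homogeneity, fix $p_0\in N$ with $F(p_0)=e$ and take $N'$ to be the integral submanifold through $e$ of $\mathrm{span}\{cX_1+dY_1,\dots,cX_n+dY_n\}$. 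At $e$, both $dF(T_{p_0}N)$ and $T_eN'$ are contained in $\mathcal D_e$ and consist exactly of the vectors $v$ satisfying $hv=\tilde a v-f\varphi v$; because $\tilde a^2+f^2=1-\kappa$ and $\varphi$ interchanges $E(\lambda)_e$ and $E(-\lambda)_e$, that set is itself an $n$-dimensional subspace of $\mathcal D_e$, so $dF(T_{p_0}N)=T_eN'$. Finally, a geodesic of $N$ through $p_0$, resp.\ of $N'$ through $e$, is sent by the umbilicity relation to a curve $\gamma$ in $M$ with $\nabla_{\dot\gamma}\dot\gamma=g(\dot\gamma,\dot\gamma)\,f\,\xi$, a second-order ODE on $M$; for matching initial velocities in $dF(T_{p_0}N)=T_eN'$ the two resulting families of curves coincide, and since they fill out neighbourhoods of $e$ in $F(N)$ and in $N'$, the immersion $F$ is locally a reparametrization of $N'$. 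Thus $N$ is, up to local isometries, the submanifold of Example~\ref{excd}.
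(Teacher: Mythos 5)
Your proposal is correct, and while its overall skeleton (force $h_2$ and $h_1$ to be constant multiples of the identity with $\widetilde H$ proportional to $\xi$, match the resulting data with Example~\ref{excd}, then conclude by homogeneity plus uniqueness for a second--order ODE satisfied by images of geodesics) coincides with the paper's, several key intermediate steps are obtained by genuinely different means. Where the paper kills the $\varphi$--component of the mean curvature and shows $a=-\eta(V)$ is a nonzero constant by manipulating the consequence of \eqref{nablah2} (choosing $Y$ independent of $\varphi V$ and $h_1\varphi V$, and separately ruling out $a=0$ with eigenvectors in $E(\pm\lambda)$), you use the symmetry of $(Y,W)\mapsto g((\overline\nabla_Xh_1)Y,W)$ applied to $(\overline\nabla_Xh_1)Y=-2fg(X,Y)Z$ to get $fZ=0$ pointwise, an open--closed argument for the constancy of $f$, and a separate (correct, though compressed: take $X=Y=Z$ and use invertibility of $h_1$) elimination of $f\equiv0$; this is arguably cleaner than the paper's $a=0$ step, which tacitly assumes both $E(\lambda)\cap TN$ and $E(-\lambda)\cap TN$ are nontrivial. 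To force $h_1$ proportional to the identity, the paper uses the Codazzi equation together with \eqref{Rkappamu}, whereas you use the parallelism $\overline\nabla h_1=0$ (available once $Z=0$), the Ricci identity $[\overline R(u,v),h_1]=0$, and the Gauss equation to compute $\overline K(u,v)=\frac{2-\mu}{1-\kappa}f^2\neq0$ for a mixed eigenpair — I checked this curvature value and it is exact, so the contradiction is valid; both routes are legitimate, yours trading Codazzi for Gauss and yielding the intrinsic curvature as a by-product (the paper's explicit identification of $N$ as a hyperbolic space, which you omit, is not needed for the congruence argument as you structure it). Finally, in the rigidity step the paper invokes Theorem 3 of \cite{boeckx2000} to produce a structure-preserving ambient isometry matching adapted bases of $E_e(\lambda)$, while you observe that $\{v\in\mathcal D_e:\ hv=\tilde a v-f\varphi v\}$ is an $n$-dimensional subspace (this uses $\tilde a^2+f^2=1-\kappa$, and is correct) containing both $dF(T_{p_0}N)$ and $T_eN'$, so the two tangent spaces coincide outright and only left translations of the Lie group are needed; the concluding ODE-uniqueness argument for $\nabla_{\dot\gamma}\dot\gamma=g(\dot\gamma,\dot\gamma)f\xi$ is then the same as the paper's. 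The only caveats are presentational: connectedness of $N$ is used implicitly (as in the paper), and the claim that Example~\ref{excd} with your choice of $c,d$ realizes exactly $h_1=\tilde a\,\mathrm{Id}$, $h_2=-f\,\mathrm{Id}$, $\widetilde H=f\xi$ deserves the short computation making it explicit.
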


\begin{proof}
Since  $N$ is totally umbilical (but not totally geodesic), then there exists a normal vector field $V \neq0$ such that $\sigma(X,Y)=g(X,Y) V$. It follows from \eqref{eqsigma2} that $g(X,Y)\eta(V)+g(X,h_2Y)=0$, for any tangent vector fields $X,Y$, and thus
\begin{equation} \label{totumb}
h_2Y=aY,
\end{equation}
with $a=-\eta(V)$.

We will now prove that $a\neq0$. Indeed, if we suppose that $a=0$, then $h_2=0$ and, as in the proof of Theorem \ref{theo51}, we have that $h=h_1$, $h_1^2=(1-\kappa)I$ and $\overline{\nabla} h_1=0$. Moreover, since $h_2=0$, it is clear that $\overline{\nabla} h_2=0$ and we obtain from \eqref{nablah2} that $\varphi\sigma(X,h_1Y)+h_1\varphi\sigma(X,Y)=0$, which, by using that $N$ is totally umbilical, becomes
\begin{equation} \label{proof52}
g(X,h_1Y)\varphi V+g(X,Y)h_1\varphi V=0,
\end{equation}
for any tangent vector fields $X,Y$. Let us now choose unit vector fields $X_{\lambda} \in E(\lambda)$ and $X_{-\lambda} \in E(-\lambda)$. Then, taking $X=Y=X_{\lambda}$ in \eqref{proof52} implies
$h_1\varphi V=-\lambda \varphi V$ and taking $X=Y=X_{-\lambda}$ in \eqref{proof52} implies
$h_1\varphi V=\lambda \varphi V$. Since $V\neq 0$, this yields a contradiction.

Therefore, we can suppose from now on that \eqref{totumb} holds for $a\neq 0$. We deduce from equation \eqref{nablah2} that
\[
X(a) Y= \varphi \sigma(X,h_1 Y)+h_1 \varphi \sigma(X,Y)=g(X,h_1 Y)\varphi V+g(X,Y) h_1 \varphi V,
\]
for every $X,Y$ tangent vector fields.

Since $\dim N \geq 3$, we can take $Y$  linearly independent from $\varphi V$ and $h_1 \varphi V$. Then we deduce from the previous equation that $X(a)=0$, for every $X$, thus $a$ is a constant. Moreover, $g(X,h_1 Y)\varphi V+g(X,Y) h_1 \varphi V=0$, for every $X,Y$ tangent vector fields. Taking unit $X=Y$, we obtain that $h_1 \varphi V=-g(X,h_1X)\varphi V$, which is only possible if $h_1=0$ or $\varphi V=0$. If $h_1=0$, then substituting \eqref{totumb} in \eqref{nablah2} gives that $2ag(X,Y) \varphi V=0$, so again $\varphi V=0$.

In both cases, we have obtained that $\varphi V=0$, so $V$ is parallel to $\xi$  and it follows from $a=-\eta(V)$ that $V=-a \xi$ and $\sigma(X,Y)=-ag(X,Y)\xi$ holds, for every $X,Y$ tangent, where $a\neq 0$ is a constant.

Let us now recall Codazzi's equation \eqref{eqcodazzi}:
\[
(R(X,Y)Z)^\perp =(\nabla _{X}\sigma)(Y,Z)-(\nabla_{Y}\sigma)(X,Z).
\]
The first term is the normal component of $R(X,Y)Z$, so by equation \eqref{Rkappamu} and the fact that $h_2 X=h_1 X+a \varphi X$, we can write
\[
\begin{aligned}
(R(X,Y)Z)^\perp &= a (g(Y,Z) \varphi X -g(X,Z) \varphi Y)\\
&+a \frac{1-\frac\mu 2}{1-\kappa}  (g(h_1 Y,Z)  \varphi X-g(h_1 X,Z) \varphi Y)\\
&-a \frac{\kappa-\frac\mu 2}{1-\kappa}  (g(Y,Z)  \varphi h_1 X-g(X,Z) \varphi h_1 Y).
\end{aligned}
\]
On the other hand,
\begin{align*}
(\nabla _{X}\sigma)(Y,Z)
&=\nabla^\perp _{X}(\sigma(Y,Z))-\sigma(\overline\nabla _{X}Y,Z)-\sigma(Y,\overline\nabla _{X}Z)=\\
&=\nabla^\perp _{X}(-ag(Y,Z)\xi)+ag(\overline\nabla _{X}Y,Z)\xi+ag(\overline\nabla _{X}Z,X)\xi =\\
&=-ag(Y,Z)\overline\nabla _{X}^\perp \xi =ag(Y,Z) (\varphi X+\varphi h_1 X).
\end{align*}
Therefore, the second term of Codazzi's equation is
\begin{align*}
(\nabla _{X}\sigma)(Y,Z)&-(\nabla _{Y}\sigma)(X,Z)=ag(Y,Z) (\varphi X+\varphi h_1 X)-a g(X,Z) (\varphi Y+\varphi h_1 Y)\\
&=a (g(Y,Z) \varphi X -g(X,Z) \varphi Y)+a (g(Y,Z) \varphi h_1 X -g(X,Z) \varphi h_1 Y).
\end{align*}
Joining both terms, and bearing in mind that $a\neq0$, we obtain
\[
\begin{aligned}
& \frac{1-\frac\mu 2}{1-\kappa}  (g(h_1 Y,Z)  \varphi X-g(h_1 X,Z) \varphi Y)=\\
&= \frac{1-\frac\mu 2}{1-\kappa}  (g(Y,Z)  \varphi h_1 X-g(X,Z) \varphi h_1 Y).\\
\end{aligned}
\]
Since we are supposing that $I_M =\frac{1-\frac\mu 2}{\sqrt{1-\kappa}} \leq -1$, then $\frac{1-\frac\mu 2}{1-\kappa} \neq0$ and applying $\varphi$ to both terms of the previous equation gives us that
\begin{equation*}
g(h_1 Y,Z)   X-g(h_1 X,Z)  Y=g(Y,Z)   h_1 X-g(X,Z)  h_1 Y,\\
\end{equation*}
for every $X,Y,Z$ tangent vector fields.

Since $\dim (N) \geq 3$, we can choose $Y=Z$ unit and orthogonal to $X, h_1 X$, and we obtain that
\begin{equation}
h_1 X=g(h_1 Y,Y)X,\\
\end{equation}
and thus $h_1 X=bX$ for some function $b$.

From \eqref{propo11}, we have that $a^2+b^2=1-\kappa =\lambda^2 \neq0$, and in particular that $b$ must be constant. We can also write that $a=\lambda \cos(\theta)$ and $b=\lambda \sin(\theta)$ for some constant $\theta\in [-\pi,\pi]$. Since $a\neq0$, then $\theta \neq \pm \frac{\pi}{2}$.

By Gauss equation \eqref{eqgauss2} and the fact that $h_2 X=aX$, then
\begin{align*}
R(X,Y,Z,W)&=\overline R(X,Y,Z,W)-g(\sigma(X,W),\sigma(Y,Z))+g(\sigma(X,Z),\sigma(Y,W))=\\
&=\overline R(X,Y,Z,W)-a^2(g(X,W)g(Y,Z)+g(X,Z)g(Y,W)),
\end{align*}
for every $X,Y,Z,W$ tangent vector fields.

On the other hand, we know from equation \eqref{Rkappamu} and the fact that $hX =bX+a\varphi X$, that
\begin{align*}
R(X,Y,Z,W)= &\left(1-\frac\mu2 +2b+b^2 \frac{1-\frac\mu2}{1-\kappa}+a^2 \frac{\kappa-\frac\mu2}{1-\kappa} \right)\\
& \quad (g(X,W)g(Y,Z)-g(X,Z)g(Y,W)).
\end{align*}
Joining the last two equations, we obtain
\begin{align*}
\overline R(X,Y,Z,W)=&
\left(1-\frac\mu2 +2b+b^2 \frac{1-\frac\mu2}{1-\kappa}+a^2 \left(\frac{\kappa-\frac\mu2}{1-\kappa}+1 \right) \right) \\
&\quad (g(X,W)g(Y,Z)-g(X,Z)g(Y,W))\\
=&\left(1-\frac\mu2 +2b+(a^2+b^2) \frac{1-\frac\mu2}{1-\kappa}\right) \\
&\quad (g(X,W)g(Y,Z)-g(X,Z)g(Y,W))\\
=&2 (1 -\frac\mu2 +b ) (g(X,W)g(Y,Z)-g(X,Z)g(Y,W)).
\end{align*}
This means that  the submanifold is a space form with  constant curvature $2 (1 -\frac\mu2 +b )$. Moreover, since $I_M =\frac{1-\frac\mu 2}{\sqrt{1-\kappa}} \leq -1$ and $b=\lambda \sin (\theta) \neq \lambda$, then $1-\frac\mu 2+b < 1-\frac\mu 2+\lambda \leq0$ and the submanifold is a hyperbolic space $N=\mathbb{H}(2 (1 -\frac\mu2 +\lambda \sin(\theta) ))$.

Summing up, there exists $\theta \in [-\pi,\pi]$, $\theta \neq \pm \frac\pi2$, such that
\begin{equation}\label{dem2}
\begin{aligned}
N    &=\mathbb{H}(2 (1 -\frac\mu2 +\lambda \sin(\theta) )),\\
h_1 X&= \lambda \sin(\theta) X,\\
h_2 X&= \lambda \cos(\theta) X,\\
\sigma(X,Y)&=-\lambda \cos(\theta)g(X,Y) \xi.
\end{aligned}
\end{equation}

We have examples of submanifolds with these properties for every value of $\theta$. Indeed, Examples \ref{excd} with $c=\cos(\pi/4-\theta/2)$, $d=-\sin(\pi/4-\theta/2)$ satisfy
\[
\begin{aligned}
\sigma(cX_i+dY_i,cX_j+dY_j)
&=2\delta_{ij}cd\lambda\xi
=-2\delta_{ij} \sin\left(\frac\pi4-\frac\theta2 \right)
\cos \left(\frac\pi4-\frac\theta2 \right)\lambda \xi=\\
&=-\delta_{ij}\sin\left(\frac\pi2-\theta \right)\lambda \xi =-\delta_{ij}\lambda \cos(\theta)  \xi=\\
&=-\lambda \cos(\theta)g(cX_i+dY_i,cX_j+dY_j)  \xi,
\end{aligned}
\]
and the rest of conditions also hold.

Now, we will prove that any totally umbilical submanifold $N$ must be one of these, up to local isometries.
Let us denote by $F:N^n\to M^{2n+1}(\kappa,\mu)$ the immersion of $N$ into $M(\kappa,\mu)$. Since $\kappa<1$ and $I_M\leq -1$, we can suppose that, locally, $M(\kappa,\mu)$ is one of the Lie groups from Example \ref{books}. Thus, it is homogeneous and we can fix a point $p_0 \in N$ such that $F(p_0)=e$, where $e$ is the neutral element of the group.

We have that $N=\mathbb{H}(2 (1 -\frac\mu2 +\lambda \sin(\theta) ))$ and we can identify $N$ with its image as the (totally umbilical) integral submanifold through $e$ of the distribution spanned by $\{ cos \left(\frac{\pi}4-\frac{\theta}2 \right)  X_i(e)
-\sin \left(\frac{\pi}4-\frac{\theta}2 \right) Y_i(e)$, $i=1, \dots,n \}$. We denote by $G$ this immersion of $N$ and we take an orthonormal basis $\{e_1,\ldots,e_n\}$ at the point $p_0$ of $N$ such that
\[
dG(e_i) = cos \left(\frac{\pi}4-\frac{\theta}2 \right)  X_i(e)
-\sin \left(\frac{\pi}4-\frac{\theta}2 \right) Y_i(e), \; i=1, \dots,n.
\]

On the other hand, we have that
\begin{align}
h(dF(e_i))
 &=dF(\lambda \sin(\theta) e_i)+\varphi dF(\lambda \cos(\theta) e_i) \nonumber\\
 &= \lambda \sin(\theta) dF(e_i)+\lambda \cos(\theta) \varphi dF( e_i), \label{eq-h-dem}\\
h\varphi (d F(e_i))
  &=-\varphi h(dF(e_i))=\lambda \cos(\theta) dF(e_i)-\lambda \sin(\theta) \varphi dF( e_i). \label{eq-hphi-dem}
\end{align}
Therefore, using \eqref{eq-h-dem} and \eqref{eq-hphi-dem}, we can construct eigenvectors of $h$ associated with the eigenvalue $\lambda$ the following way:
\begin{align*}
h &\left(cos \left(\frac{\pi}4-\frac{\theta}2 \right)  dF(e_i)
+\sin \left(\frac{\pi}4-\frac{\theta}2 \right) \varphi ( dF(e_i)) \right)=\\
&=\lambda \left(
\left( cos \left(\frac{\pi}4-\frac{\theta}2 \right) \sin(\theta)
+\sin \left(\frac{\pi}4-\frac{\theta}2 \right) \cos(\theta) \right)
  dF(e_i) \right.\\
&
\hspace{0.6cm} \left.
+ \left(cos \left(\frac{\pi}4-\frac{\theta}2 \right) \cos(\theta)
-\sin \left(\frac{\pi}4-\frac{\theta}2 \right) \sin(\theta) \right)
\varphi (dF( e_i)) \right)=\\
&=\lambda \left(  \sin \left(\frac{\pi}4+\frac{\theta}2 \right)   dF(e_i)
+ \cos \left(\frac{\pi}4+\frac{\theta}2 \right)  \varphi ( dF( e_i) )\right)=\\
&=\lambda\left(cos \left(\frac{\pi}4-\frac{\theta}2 \right)  dF(e_i)
+\sin \left(\frac{\pi}4-\frac{\theta}2 \right) \varphi ( dF(e_i)) \right),
\end{align*}
for any $i=1,\ldots,n$.

Note that, by construction, both
\[
\cos \left(\frac{\pi}4-\frac{\theta}2 \right)  dF(e_i)
+\sin \left(\frac{\pi}4-\frac{\theta}2 \right) \varphi ( dF(e_i)), \; i=1,\dots,n
\]
and
\[
X_1(e),\dots,X_n(e)
\]
are basis of $E_e(\lambda)$. So, in view of Theorem 3 of \cite{boeckx2000}, there exists an isometry $H$ of $M^{2n+1}(\kappa,\mu)$ preserving the structure such that $H(e)=e$ and $H$ maps one basis of $E_e(\lambda)$ into the other one.
As a consequence, we have that $H \circ F(e) = G(e)$ and $d(H \circ F)(e_i) = dG(e_i)$.

We now take a geodesic $\gamma$ in $N$ through the point $p_0$. Since $N$ is totally umbilical with respect to both $H \circ F$ and $G$, then
$\gamma_1=H \circ F(\gamma)$ and $\gamma_2=G(\gamma)$ are curves in $M(\kappa,\mu)$ passing through $e$ that satisfy
$\nabla_{\gamma_1'} \gamma_1'=\nabla_{\gamma_2'} \gamma_2' =-\lambda \sin(\theta) \xi.$
Since $d(H \circ F)(e_i)=dG(e_i)$, they are also determined by the same initial conditions. Therefore, both curves need to coincide, so  $H \circ F(\gamma(s))= G(\gamma(s))$ for all $s$ and thus $F$ and $G$ are congruent.
\end{proof}


\end{document}